\documentclass{amsart}
\usepackage{xypic}

\xyoption{all}

\newif\iffurther
%\furthertrue

\newtheorem{thm}{Theorem}[section] %the resolution could also be [subsection]

\newtheorem{cor}[thm]{Corollary}

\newtheorem{lem}[thm]{Lemma}
\newtheorem{prop}[thm]{Proposition}

\def\[{\left[}
\def\]{\right]}

\def\Span{{\operatorname{Span}}}

\def\GKdim{{Gel'fand-Kirillov dimension}}

\long\def\forget#1\forgotten{{}}

 % triangle without the - sign: for ideals.

\begin{document}

\title[]{Growth of monomial algebras, simple rings and free subalgebras}

\author{Be'eri Greenfeld}
\address{Department of Mathematics, Bar Ilan University, Ramat Gan 5290002, Israel}
\email{beeri.greenfeld@gmail.com}

\thanks{The author thanks the referee for his/her useful comments on the paper.
}

\date{\today}

%\maketitle

\begin{abstract}
We construct finitely generated simple algebras with prescribed growth types, which can be arbitrarily taken from a large variety of (super-polynomial) growth types. This (partially) answers a question raised in \cite[Question~5.1]{Greenfeld}.

Our construction goes through a construction of finitely generated just-infinite, primitive monomial algebras with prescribed growth type, from which we construct uniformly recurrent infinite words with subword complexity having the same growth type.

We also discuss the connection between entropy of algebras and their homomorphic images, as well as the degrees of their generators of free subalgebras.
\end{abstract}

\maketitle

\setcounter{tocdepth}{3}
%\tableofcontents

\section{Introduction}

Fix a function $f:\mathbb{N}\rightarrow \mathbb{N}$ satisfying the following properties, which every growth function of a finitely generated associative algebra satisfies:
\begin{itemize}
\item Monotonely increasing, namely $f(n)<f(n+1)$ for all $n\in \mathbb{N}$;
\item Submultiplicativity, namely $f(n+m)\leq f(n)f(m)$ for all $n,m\in \mathbb{N}$.
\end{itemize}

We think of two such functions $f,g$ as representing the same growth type, denoted by $f\sim g$ if there exist constants $C,D>0$ for which $f(n)\leq g(Cn)\leq f(Dn)$ for all $n\in \mathbb{N}$.  This notion of equivalence is indeed reasonable since growth functions of finitely generated associative algebras are defined up to this equivalence relation.

In \cite{BartholdiSmoktunowicz}, it is shown how to realize such $f$ as the growth type of some finitely generated monomial algebra, provided that $f$ grows rapidly enough, namely, there is $\alpha>0$ such that $nf(n)\leq f(\alpha n)$ for all $n\in \mathbb{N}$.

In \cite{Greenfeld}, it is shown how to modify the construction of Bartholdi and Smoktunowicz to result in a primitive algebra (without further limitations on the realized function $f$). It is also asked in \cite[Question~5.1]{Greenfeld} whether similar result could be proven for simple algebras.

In this paper we show how the basic construction from \cite{BartholdiSmoktunowicz} can be modified to result in just-infinite algebras, namely, infintie dimensional algebras all of whose proper quotients are finite-dimensional. Since in this context it can be easily shown that just-infinitude implies primitivity, this provides another approach for the main result of \cite{Greenfeld}. These algebras are determined by infinite words, and therefore our algebras in this paper give rise to infinite words which are uniformly recurrent and have prescribed complexity growth type (which is arbitrary up to several mild constraints, see Theorem \ref{uniformly recurrent}). 

Uniformly recurrent words have great importance from combinatorial, dynamical and algebraic points of view \cite{AS, BM, BR, L, F}.
It should be mentioned that in \cite{Julien}, Cassaigne constructs uniformly recurrent words with a large class of intermediate growth functions; the class of functions realized here is not contained in the class realized in \cite{Julien}.

The main application of our new construction comes in the view of a recent construction of Nekrashevych \cite{Nekrashevych}. Namely, the new growth types realized here as complexity functions of uniformly recurrent words allow us to construct finitely generated simple algebras with a larg variety of prescribed growth types, consisting of almost all functions realized in \cite{BartholdiSmoktunowicz} as growth types of algebras (and later in \cite{Greenfeld} as growth types of prime and primitive algebras). This provides a partial answer to \cite[Question~5.1]{Greenfeld}.

Recall that for an infintie word $w$ with letters $x_1,\dots,x_d$ we have a corresponding monomial algebra $A_w$ generated by $x_1,\dots,x_d$, which is spanned by all monomials which occur as subwords of $w$. Vice versa, prime (finitely generated) monomial algebras are induced from infinite words, and many combinatorial properties of $w$ reflect algebraically in $A_w$.
The connection between the asymptotic subword complexity $p_w(n)$ of $w$ and the growth $g_{A_w}(n)$ of its corresponding monomial algebra $A_w$ is intimate: $g_{A_w}(n)=p_w(1)+\cdots+p_w(n)$ where the generating vector space is taken to be $V=\Span\{1,x_1,\dots,x_d\}$.

In the current construction, slight limitations must be put on the function $f$; however it still enables one realize arbitrary super-\GKdim\ (i.e.~$f(n)\sim \exp{n^r}$ for arbitrary $r>0$) or entropy, a notion which we now quickly recall.

Fix a base field $k$. Recall that for a graded $k$-algebra $R=\bigoplus_{i\in \mathbb{N}}R_i$ where $\dim_k R_i<\infty$ the \textit{entropy}, in the sense of Newman, Schneider and Shalev \cite{Entropy} is defined to be: $$H(R) = \limsup_{n\rightarrow \infty} \sqrt[n]{\dim_k {\bigoplus_{i\leq n} R_i}}$$

In \cite[Section~3]{Entropy} it is shown that while the entropy of a free $d$-generated associative algebra is $d$, all of its proper graded quotients have entropy strictly less than $d$. As demonstrated there, it is not true in general that the entropy must reduce when passing to proper quotient algebras. The writers ask if this phenomenon has a unified explanation: `It would be interesting to characterize graded associative algebras whose entropy is larger than that of each of their proper quotients.'

Let us say that an algebra all of whose proper quotients have strictly smaller entropy has \textit{sharp entropy}. Attempting to indicate the difficulty of the problem cited above, we mention that the algebras we construct here, which, as mentioned before have entropy strictly larger than $1$ but all proper quotients finite-dimensional, in particular have sharp entropy. They can be also modified to have relatively arbitrarily small entropy larger than $1$.

In \cite{Smoktunowicz}, Smoktunowicz constructs graded algebras with prescribed relations, having sufficient control on the asymptotic growth of the constructed algebras. In particular, it is noted in \cite{Smoktunowicz} that by \cite[Theorem~1.1]{Smoktunowicz} there exist graded algebras with arbitrarily small entropy.

We mention that in \cite[Corollary~2.5]{Greenfeld}, we show how the mentioned above technique presented in \cite{BartholdiSmoktunowicz} can be modified to result in \textit{prime} algebras with arbitrarily small entropy.

It is straight-forward that if $R$ contains a free subalgebra then $H(R)>1$. There is moreover an evident better lower bound on $R$ in terms of the degree of the homogeneous components on which the free generators are supported (Proposition \ref{exponential}). In this paper we also show that this evident bound cannot be significantly improved: there exists a family of graded algebras with entropy arbitrarily small and free generators in the expected degrees.

In the rest of the paper $k$ is an arbitrary base field, and monomial algebras are endowed with natural grading: $\deg(x_i)=1$ for each letter $x_i$, and we write $R=\bigoplus_n R_n$ as a decomposition into homogeneous components.

\section{Growth of monomial algebras and simple algebras}

We start by a quick overview of the techinque, which was introduced in \cite{BartholdiSmoktunowicz}, where additional details appear.

Let $f:\mathbb{N}\rightarrow \mathbb{N}$ be a monotonely increasing and submultiplicative (i.e. $f(m+n)\leq f(m)f(n)$) function.

We fix letters $x_1,\dots,x_d$ where $d=f(1)$ and define a system of sets $W(2^i)$ of length $2^i$ monomials.

Let $W(1)=\{x_1,\dots,x_d\}$. Suppose that for every $i\in \mathbb{N}$ we have a set $C(2^i)\subseteq W(2^i)$ of cardinality $\lceil \frac{f(2^{i+1})}{f(2^i)} \rceil$ and inductively define $W(2^{i+1})=C(2^i)W(2^i)$. For two monomials write $u\preceq w$ if $u$ is a subword of $w$. Set $W = \bigcup_{i\in \mathbb{N}}W(2^i)$ and define an ideal $I\triangleleft k\left<x_1,\dots,x_d\right>$ generated by all monomials which do not occur as subwords of any monomial from $W$. Finally, define an algebra $R = k\left<x_1,\dots,x_d\right>/I$. As shown in \cite[Corollary~D]{BartholdiSmoktunowicz}, $\dim_k R_n\sim f$. Moreover, as shown in \cite[Theorem~C]{BartholdiSmoktunowicz}, we actually have: $f(2^n)\leq \dim_k R_{2^n}\leq 2^{2n+3}f(2^{n+1})$.

\begin{thm} \label{uniformly recurrent}
Let $f:\mathbb{N}\rightarrow \mathbb{N}$ be a monotonely increasing function such that:
\begin{itemize}
\item There exists some $\alpha>0$ for which $nf(n)\leq f(\alpha n)$ for all $n\in \mathbb{N}$;
\item For all $\beta>0$ there is some $n_\beta\in \mathbb{N}$ such that for all $n\geq n_\beta$ we have: $f(2^{n+1})\leq \frac{1}{\beta}f(2^n)^2$;
\item $\prod_{n=0}^{\infty} \left(1+\frac{f(2^n)}{f(2^{n+1})}\right) < \infty$.
\end{itemize}
Then there exists a fintiely generated monomial, just-infinite primitive algebra whose growth type is $\sim f$. Moreover, this algebra is of the form of the algebras constructed in \cite{BartholdiSmoktunowicz}.
\end{thm}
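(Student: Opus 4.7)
The plan is to follow the Bartholdi--Smoktunowicz construction recalled just above the theorem, modifying only the choice of the sets $C(2^i) \subseteq W(2^i)$, which in the original construction are essentially arbitrary subject to the cardinality constraint $|C(2^i)| = \lceil f(2^{i+1})/f(2^i) \rceil$. The goal is to use this freedom to force the resulting language $\bigcup_i W(2^i)$ to be \emph{uniformly recurrent}: for every monomial $m$ occurring as a subword of some element of $W$, there is an $N$ such that every element of $W(2^i)$ with $i \geq N$ contains $m$ as a subword. Once this holds, the growth estimate $f(2^n) \leq \dim_k R_{2^n} \leq 2^{2n+3} f(2^{n+1})$ from \cite[Theorem~C]{BartholdiSmoktunowicz} still applies, so the algebra retains growth type $\sim f$; and just-infinitude is immediate: any proper ideal $J$ of $R$ contains some monomial $m$ in the language, uniform recurrence forces $m$ to be a subword of every sufficiently long element of $W$, and hence all but finitely many spanning monomials of $R$ lie in $J$, so $R/J$ is finite-dimensional. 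Primitivity then follows from just-infinitude by the standard monomial-algebra argument alluded to in the introduction.

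Concretely, I would interleave a diagonal bookkeeping with the inductive construction. Enumerate all pairs $(i, m)$ with $m \in W(2^i)$; at stage $n$, handle finitely many pending requests $(i_k, m_k)$ with $i_k < n$ by fixing a \emph{witness word} $U_n$ that contains all pending $m_k$ as subwords (one can take $U_n = m_{k_1} m_{k_2} \cdots m_{k_t}$), and restricting $C(2^n)$ to consist of elements of $W(2^n)$ beginning with $U_n$. By the recursive structure $W(2^n) = C(2^{n-1}) W(2^{n-1}) = C(2^{n-1}) \cdots C(1)$, the number of elements of $W(2^n)$ with a specified prefix of length $\ell$ is at least $|W(2^n)| / f(\ell')$ for a suitable $\ell' \leq 2 \ell$. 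Provided $|U_n|$ is small enough relative to $2^n$, this restricted set still exceeds $\lceil f(2^{n+1})/f(2^n) \rceil$, so $C(2^n)$ can be chosen of the prescribed cardinality. In the resulting construction, every element of $W(2^{n+1})$ begins with $U_n$ and therefore contains each pending $m_k$ as a subword, so every initially-listed requirement is eventually met and uniform recurrence holds.

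The main obstacle is the quantitative coordination: the prefix $U_n$ must be long enough to encode all pending witnesses, yet short enough that at least $\lceil f(2^{n+1})/f(2^n) \rceil$ extensions remain available in $W(2^n)$. This is exactly what the three hypotheses on $f$ purchase. The first inequality $nf(n) \leq f(\alpha n)$ guarantees that $f$ is super-polynomial, so that the combinatorial space grows fast enough for the construction to run at all. The second, $f(2^{n+1}) \leq \beta^{-1} f(2^n)^2$ for every $\beta$ and sufficiently large $n$, provides per-stage slack: the required cardinality $f(2^{n+1})/f(2^n) \leq \beta^{-1} f(2^n)$ is a vanishingly small fraction of $|W(2^n)|$, leaving ample room for the prefix restriction. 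The third, convergence of $\prod_{n=0}^{\infty}(1 + f(2^n)/f(2^{n+1}))$, ensures that the cumulative effect of the successive restrictions is bounded, so that the modified $\dim_k R_n$ remains within the equivalence class of $f$. Verifying that these three quantitative ingredients do combine into a consistent schedule of witness lengths $|U_n|$ is, I expect, the most delicate part of the argument.
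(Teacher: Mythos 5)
Your overall strategy---rerun the Bartholdi--Smoktunowicz construction and use the freedom in choosing the sets $C(2^i)$, via a diagonal schedule, to force uniform recurrence, then deduce just-infinitude and primitivity---is exactly the paper's strategy. But the key combinatorial step, as you formulate it, would fail. You restrict $C(2^n)$ to elements of $W(2^n)$ \emph{beginning} with a batched witness $U_n=m_{k_1}\cdots m_{k_t}$, and you assert that the number of elements of $W(2^n)$ with a specified prefix of length $\ell$ is at least $|W(2^n)|/f(\ell')$. This is false: since $W(2^n)=C(2^{n-1})W(2^{n-1})$, any prefix of length $\ell\leq 2^{n-1}$ of an element of $W(2^n)$ is a prefix of an element of the previously chosen small set $C(2^{n-1})$, so short prefixes are rigidly constrained by earlier stages. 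For instance, if every element of some $C(2^j)$ begins with $x$, then every element of every later $W(2^n)$ begins with $x$, and no witness $U_n$ starting with $y$ is realizable as a prefix at all---the count can be zero. Moreover an ad hoc concatenation $m_{k_1}\cdots m_{k_t}$ of words of the language need not itself be a factor of any word of the language. The paper sidesteps both problems by fixing \emph{suffixes}, one target monomial per stage: since $W(2^{t'})=C(2^{t'-1})\cdots C(2^{t})W(2^{t})$, every $w'\in W(2^{t})$ genuinely occurs as the length-$2^t$ suffix, and the number of elements of $W(2^{t'})$ with that fixed suffix is exactly $\prod_{i=t}^{t'-1}\lceil f(2^{i+1})/f(2^i)\rceil$. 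One chooses $w'\in W(2^t)$ containing the single pending monomial $w$ as a factor, and hypotheses (2) and (3) supply a function $\mu$ with $f(2^{n+1})/f(2^n)\leq \prod_{i=t}^{n-1}\lceil f(2^{i+1})/f(2^i)\rceil$ for all $n\geq\mu(t)$, so the suffix restriction is compatible with the prescribed cardinality $\lceil f(2^{t'+1})/f(2^{t'})\rceil$. Note also that you misattribute the role of hypothesis (3): it bounds the cumulative ceiling corrections $\prod_i\left(1+f(2^i)/f(2^{i+1})\right)$ in this counting argument, not the growth type of $R$---the growth type $\sim f$ is automatic from \cite[Theorem~C]{BartholdiSmoktunowicz} because the cardinalities $|C(2^i)|$ are never changed. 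Finally, forcing a common suffix on all of $C(2^{t'})$ feeds the paper's Lemma~\ref{capture at finite}: every long non-zero monomial decomposes into blocks lying in $C(2^{t'})W(2^{t'})\cup W(2^{t'})C(2^{t'})$, giving occurrences of $w$ with bounded gaps, and primeness then realizes $R$ as the algebra of a uniformly recurrent infinite word.

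Your algebraic conclusions also rest on a false lemma. It is not true that a proper ideal of a monomial algebra contains a monomial: $(x+y)\lhd k\langle x,y\rangle$ contains none (its quotient is the free algebra on one generator), and the same phenomenon persists in proper monomial quotients, even for graded ideals. So just-infinitude is not ``immediate'' from uniform recurrence by your argument; the paper cites \cite[Theorem~3.2]{Belovetal}, which is precisely the (non-trivial) statement that monomial algebras of uniformly recurrent words are just infinite. Likewise, primitivity needs an actual argument, which the paper supplies: just infinite implies prime \cite{FP}; Okni\'nski's trichotomy \cite{Trichotomy} leaves primitive, PI, or nonzero locally nilpotent Jacobson radical; superpolynomial growth (this is where hypothesis (1) re-enters, beyond enabling the construction) rules out PI; and a nonzero radical $J$ is ruled out via Beidar--Fong \cite{BeidarFong} and Bergman's homogeneity theorem \cite{Bergman}, since $R/J$ would be a finite-dimensional graded, hence nilpotent, semiprimitive algebra. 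In short: same architecture as the paper, but the prefix-forcing count and the ``ideals contain monomials'' step are genuine gaps, and each must be replaced---by the suffix-fixing count with the $\mu$-function, and by the cited just-infinitude and trichotomy results, respectively.
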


We now prove a supporting lemma, after which we turn to prove Theorem \ref{uniformly recurrent}.

Recall that an infinite word is \textit{uniformly recurrent} if for every factor $w$ of the infinite word there exists some constant $c=c_w$ such that the next occurence of $w$ in the infinite word never takes longer than $c_w$ letters.

\begin{lem} \label{capture at finite}
Let $R$ be an algebra constructed from a system of sets $W(2^n),C(2^n)$ as described above.
Suppose for every $w$ a non-zero monomial in $R$ there exists some $n=n_w$ for which all monomials in $C(2^n)$ contain $w$ as a factor.

Then $R$ is defined by an infinite uniformly recurrent word.
\end{lem}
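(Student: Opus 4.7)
\emph{Proof plan.} My plan is to derive a quantitative estimate of the form ``every long enough non-zero monomial of $R$ contains $w$'', and then both construct an infinite word with the right language and verify uniform recurrence directly from the estimate.

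First, from the hypothesis I induct on $m$ to show that every element of $W(2^m)$ for $m \geq n_w + 1$ contains $w$ as a factor. The base case is $W(2^{n_w+1}) = C(2^{n_w}) W(2^{n_w})$: each element starts with a $C(2^{n_w})$-prefix which contains $w$ by hypothesis. The inductive step uses $W(2^{m+1}) = C(2^m) W(2^m)$ together with $C(2^m) \subseteq W(2^m)$. Next, iterating $W(2^k) = C(2^{k-1}) W(2^{k-1}) \subseteq W(2^{k-1}) \cdot W(2^{k-1})$, any $v \in W(2^n)$ decomposes as a concatenation of $2^{n - n_w - 1}$ consecutive length-$2^{n_w+1}$ blocks, each lying in $W(2^{n_w+1})$ and hence containing $w$. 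A simple pigeonhole shows that any factor of $v$ of length at least $2^{n_w+2}$ must fully contain at least one such block, and therefore contains $w$. Since every non-zero monomial of $R$ is a subword of some element of $W$, this yields the key estimate: \emph{every non-zero monomial of $R$ of length $\geq 2^{n_w+2}$ contains $w$ as a factor.}

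For the construction, $W(2^{k+1}) = C(2^k) W(2^k)$ lets me pick a chain $v_0, v_1, v_2, \dots$ with $v_k \in W(2^k)$ and $v_k$ a suffix of $v_{k+1}$; the left-infinite limit $\mathbf{w} = \lim_k v_k$ then has every finite factor a subword of some $v_k$, hence a non-zero monomial of $R$. By the key estimate, every factor of $\mathbf{w}$ of length $\geq 2^{n_w+2}$ contains $w$, so $\mathbf{w}$ is uniformly recurrent. Conversely, any non-zero monomial $m$ of $R$ occurs as a factor of $\mathbf{w}$: take any factor of $\mathbf{w}$ of length $\geq 2^{n_m+2}$ (possible since $\mathbf{w}$ is infinite), which by the estimate contains $m$. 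Hence the factor set of $\mathbf{w}$ is exactly the set of non-zero monomials of $R$, i.e.\ $R = A_{\mathbf{w}}$.

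The main technical point is the block-decomposition step: recognizing that the binary refinement $W(2^k) \subseteq W(2^{k-1}) \cdot W(2^{k-1})$, iterated down to level $n_w+1$, partitions any $W(2^n)$-word into equal-size blocks all lying in $W(2^{n_w+1})$. Once this is in hand, the hypothesis enters only in the induction that these blocks contain $w$; the pigeonhole, the construction of $\mathbf{w}$, and the verification of uniform recurrence and of the correct factor set are then routine.
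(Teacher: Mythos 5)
Your proof is correct, and it diverges from the paper's in a genuine way in its second half. The first half is essentially the paper's argument in a cleaner, more self-contained form: the paper also cuts a long non-zero monomial into blocks and argues every block contains $w$, but it does so by citing \cite{BartholdiSmoktunowicz} for the (somewhat loosely stated) fact that the blocks lie in $C(2^{n_w})W(2^{n_w})\cup W(2^{n_w})C(2^{n_w})$, obtaining the recurrence constant $2^{n_w+1}$; your version, decomposing at level $n_w+1$ via the inclusion $W(2^k)\subseteq W(2^{k-1})W(2^{k-1})$ and using only that each aligned $W(2^{n_w+1})$-block has a $C(2^{n_w})$-prefix, derives the same estimate from scratch at the harmless cost of the constant $2^{n_w+2}$. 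Where you really depart is in producing the infinite word: the paper proves $R$ is prime (any two monomials occur disjointly in a long enough non-zero monomial, so $w_1Rw_2\neq 0$) and then appeals to the known fact that finitely generated prime monomial algebras are defined by infinite words, whereas you construct the word explicitly as the limit of a suffix chain $v_k\in W(2^k)$ with $v_{k+1}\in C(2^k)v_k$ (nonempty $C(2^k)$ makes the chain exist), and verify directly from your key estimate that the factor set of the limit word is exactly the set of non-zero monomials, so $R=A_{\mathbf{w}}$. Your route buys self-containedness --- no appeal to the prime-monomial-algebra classification --- and an explicit word; the paper's route is shorter given the cited background and records primality along the way. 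One cosmetic caveat: your suffix chain yields a \emph{left}-infinite word, while the application in Theorem \ref{simple} wants $w\in X^{\mathbb{Z}}$; since your estimate shows every non-zero monomial occurs strictly inside longer ones (hence the language is factorial and biextendable), a standard compactness argument upgrades your word to a bi-infinite one with the same factor language, so nothing is lost, but it is worth a sentence.
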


\begin{proof}
Let $w$ be a non-zero monomial of $R$. We claim that for $c_w=2^{n_w+1}$ we have that for every non-zero monomial $u$ of $R$ of length at least $c_w$, $w$ occurs in $u$ and the next occurence of $w$ in $u$ never takes longer than $c_w$ letters.

Pick a word $u$ of length at least $2^{n_w}$, and write $|u|=2^{n_w}t+s$ where $s<2^{n_w}$. Now write $u=u_1\cdots u_tu'$ where $|u_i|=2^{n_w}$ and $|u'|=s$.
As mentioned in \cite{BartholdiSmoktunowicz} it follows that $u_i\in C(2^{n_w})W(2^{n_w})\cup W(2^{n_w})C(2^{n_w})$ for all $i$ and in both cases $u_i$ contains $w$ as a subword, hence $w$ occurs in $u$ with no more than $2^{n_w+1}$ letters between each two occurences.

Now $R$ is prime. To see this, fix two monomials $w_1,w_2$ and find a long enough non-zero monomial; then both $w_1,w_2$ occur as subwords of it in disjoint places, and hence $w_1Rw_2\neq 0$. It follows that we can write $R$ as the monomial algebra associated with some infinite word. It is now clear why this word is uniformly recurrent.
\end{proof}

We are now ready to prove Theorem \ref{uniformly recurrent}.

\begin{proof}(Theorem \ref{uniformly recurrent})
We use the technique introduced above, originally from \cite{BartholdiSmoktunowicz}.

To construct sets $W(2^i),C(2^i)$ we need a preliminary tool. We fix a function $\mu:\mathbb{N}\rightarrow \mathbb{N}$ such that $\mu(t)>t+100$ and for all $n\geq \mu(t)$: $$\frac{f(2^{n+1})}{f(2^{n})}\leq \prod_{i=t}^{n-1} \lceil \frac{f(2^{i+1})}{f(2^{i})}\rceil.$$ We claim that this is possible by the second assumption of the Theorem, as we now explain.

Let $\beta = \prod_{i=0}^{t-1} \lceil \frac{f(2^{i+1})}{f(2^{i})} \rceil$. Then the inequality above takes the form $f(2^{\mu(t)+1})\leq \frac{1}{\beta\beta'} f(2^{\mu(t)})^2$ where $\beta'$ is some constant bounding above the following product: 
$$\prod_{i=0}^{\infty} \left(\lceil\frac{f(2^{i+1})}{f(2^i)}\rceil \bigg/ \frac{f(2^{i+1})}{f(2^i)}\right) \leq \prod_{i=0}^{\infty} \left(1+\frac{f(2^i)}{f(2^{i+1})}\right)<\infty$$
where the second inequality follows from the third assumption.

We are now ready to construct sets $W(2^i),C(2^i)$. We work by induction. Set $W(1)=\{x_1,\dots,x_d\}$.

We work by induction; fix $t$ and let $m$ be the least number such that we have already defined all sets $W(2^i),C(2^i)$ for $i\leq m$. Let $w$ be a monomial appearing as a subword of some $w'\in W(2^t)$. Pick some $t'=\max(\mu(t),m+1)$ and for all $i<t'$ if $C(2^i)$ was not yet defined, take it to be arbitrary (up to the trivial conditions $C(2^i)\subseteq W(2^i)$ and on the size of $C(2^i)$).

Now $t'\geq \mu(t)$ so by definition of $\mu$ it follows that we can fix the $2^t$-length suffix of all monomials of $C(2^{t'})$, namely, since $\frac{f(2^{n+1})}{f(2^{n})}\leq \prod_{i=t}^{n-1} \lceil \frac{f(2^{i+1})}{f(2^{i})}\rceil$ we can take all monomials of $C(2^{t'})$ to be of the form $vw'$ where $v$ is arbitrary and $w'$ is our fixed length $2^t$ monomial containing $w$ as a factor.

In this way we are able to define all sets $W(2^i),C(2^i)$, taking care each time of different monomial, covering all non-zero monomials of the algebra, that is: for each non-zero monomial there is some constant for which the conditions of Lemma \ref{capture at finite} are satisfied. It follows that we construct an algebra $R$ with growth $\sim f$ which is defined by a uniformly recurrent infinite word.

Note that by \cite[Theorem~3.2]{Belovetal}, it follows that $R$ is just-infinite. To see that $R$ is moreover primitive apply the following argument.

Since $R$ is a just infinite algebra, $R$ is prime \cite{FP}. Then by a result of Okni\'nski \cite{Trichotomy}, we have that $R$ is either primitive, satisfies a polynomial identity, or has nonzero locally nilpotent Jacobson radical.

However, since the growth type $f$ is super-polynomial it is impossible that $R$ is PI, so suppose it has non-zero Jacobson radical $0\neq J\triangleleft R$. 

Note that $R\neq J$ since by \cite{BeidarFong} the Jacobson radical of a monomial algebra is locally nilpotent. Then $R/J$ is finite-dimensional, non-zero semiprimitive ring, and since $J$ is homogeneous (with respect to the standard grading on $R$) by \cite{Bergman} it follows that $R/J$ is a graded finite-dimensional algebra, hence nilpotent, contradicting that $R/J$ is semiprimitive.
\end{proof}

\subsection{Simple algebras}

As a consequence of Theorem \ref{uniformly recurrent} and Lemma \ref{capture at finite}, combined with the work from \cite{Nekrashevych} we have the following Corollary:

\begin{thm} \label{simple}
For any function $f$ satisfying the conditions of Theorem \ref{uniformly recurrent} there exists a finitely generated simple algebra with growth type $\sim f$.
\end{thm}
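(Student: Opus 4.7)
The plan is to combine Theorem \ref{uniformly recurrent} with Nekrashevych's construction of finitely generated simple algebras from minimal subshifts \cite{Nekrashevych}. The point of Theorem \ref{uniformly recurrent} (together with Lemma \ref{capture at finite}) is that the prescribed function $f$ is realized as the growth type of a monomial algebra $A_w$ coming from a uniformly recurrent infinite word $w$; the point of \cite{Nekrashevych} is that one can attach to such a word a finitely generated \emph{simple} algebra whose growth is controlled by the subword complexity of $w$. Composing the two produces the desired simple algebra.

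Concretely, I would proceed in three steps. First, apply Theorem \ref{uniformly recurrent} with the given $f$ to obtain a finitely generated just-infinite primitive monomial algebra $A_w$ with $g_{A_w} \sim f$; by Lemma \ref{capture at finite} and the construction used in the proof of Theorem \ref{uniformly recurrent}, the defining word $w$ is uniformly recurrent, so its shift-orbit closure $X_w$ is a minimal subshift whose partial-sum complexity satisfies $p_w(1)+\cdots+p_w(n) \sim f(n)$. Second, feed $X_w$ into the construction of \cite{Nekrashevych} to obtain a finitely generated simple $k$-algebra $S=S(w)$. Third, read off the growth of $S$ from the quantitative estimates in \cite{Nekrashevych} and match it against the subword complexity of $w$, concluding that $g_S \sim f$.

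The main obstacle is the third step: one must verify that Nekrashevych's construction is sufficiently growth-preserving, i.e.\ that the growth type of $S(w)$ is $\sim$-equivalent to the summed subword complexity of $w$, and not merely sandwiched between looser bounds. This is essentially a bookkeeping exercise within the construction in \cite{Nekrashevych}, but it is where all the substance of the proof lies; once in place, the hypotheses of Theorem \ref{uniformly recurrent} transfer verbatim to Theorem \ref{simple}, giving the desired partial answer to \cite[Question~5.1]{Greenfeld}.
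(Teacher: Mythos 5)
Your proposal is correct and follows essentially the same route as the paper: Theorem \ref{uniformly recurrent} plus Lemma \ref{capture at finite} produce a uniformly recurrent (hence minimal, and non-periodic since the complexity is super-polynomial) word $w$ with $p_w \sim f$, which is then fed into Nekrashevych's construction. The ``main obstacle'' you identify in your third step is in fact already resolved by \cite[Theorem~1.2]{Nekrashevych}, which states directly that the resulting simple algebra has growth $\sim n\,p_w(n)$, and this matches $f$ because the first hypothesis of Theorem \ref{uniformly recurrent} ($nf(n)\leq f(\alpha n)$) gives $nf(n)\sim f(n)$.
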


This provides a partial answer to \cite[Question~5.1]{Greenfeld}. In \cite{Nekrashevych}, Nekrashevych introduces a method for associating to a Hausdorff \'etale groupoid $\mathcal{G}$ an algebra $k[\mathcal{G}]$ consisting of continuous, compactly supported functions $f:\mathcal{G}\rightarrow k$ (where $k$ is endowed with the discrete topology). As shown in the mentioned paper, the algebras associated with \'etale groupoids constructed out of infinite words (through subshifts) conatin finitely generated simple subalgebras.

\begin{proof}(Theorem \ref{simple})
By Theorem \ref{uniformly recurrent} and Lemma \ref{capture at finite} there exists an infinite uniformly recurrent word $w$ in finitely many letters (say $X=\{x_1,\dots,x_n\}$) having subword complexity $p_w(n)\sim f$ (recall that $f(n)\sim nf(n)$). In the terminology of \cite{Nekrashevych}, $w\in X^{\mathbb{Z}}$ is minimal and non-periodic, hence by \cite[Theorem~1.2]{Nekrashevych} the algebra $\mathcal{A}_w$ is finitely generated, simple and has growth rate $\sim np_w(n)\sim f$.
\end{proof}

\section{Entropy of algebras and free subalgebras}

In \cite{Entropy}, Newman, Schnider and Shalev introduced the notion of \textit{entropy} of a graded algebra, and investigated it from various poins of view, with emphasis on free algebras and their subalgebras (namely, Section $4$ in \cite{Entropy}).

\subsection{Sharp entropy}
In \cite{Entropy}, the authors ask the following: `It would be interesting to characterize graded associative algebras whose entropy is larger than that of each of their proper quotients.'

Let us say that an algebra all of whose proper quotients have strictly smaller entropy has \textit{sharp entropy}. Attempting to indicate the difficulty of the problem cited above, we now show how Theorem \ref{uniformly recurrent} results in algebras with sharp entropy, which is arbitrarily close to $1$. Note that in general, the equivalence relation between growth types (namely, up to constant scaling of the argument) does not preserve the notion of entropy. However, careful analysis of the construction enables to bound the entropy of the resulting algebras, both from below and above.

\begin{cor}
Let $0<\varepsilon<1$. Then there exists a graded algebra $R$ with $1+\frac{1}{3}\varepsilon\leq H(R)\leq 1+3\varepsilon$ with sharp entropy.
\end{cor}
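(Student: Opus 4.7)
The plan is to apply Theorem \ref{uniformly recurrent} to a specifically tuned function $f$ whose values at powers of $2$ track $(1+\varepsilon/3)^{2^k}$, and then to read off $H(R)$ from the Bartholdi--Smoktunowicz bounds. Concretely, set $\varepsilon' = \varepsilon/3$ and take $f(n) = \lceil (1+\varepsilon')^n n \rceil$. Monotonicity is clear, and the three hypotheses of Theorem \ref{uniformly recurrent} are easy to verify: the first holds for $\alpha$ large enough depending on $\varepsilon'$ (exponential beats quadratic); the second follows from $f(2^{n+1})/f(2^n)^2 \sim 2^{1-n}\to 0$, so the condition holds for every $\beta$ from some point on; and the third reduces to $\sum_n f(2^n)/f(2^{n+1}) \sim \sum_n (1+\varepsilon')^{-2^n}/2$, which converges extremely fast. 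The theorem then produces a finitely generated just-infinite monomial algebra $R$ with growth $\sim f$.

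Sharpness of entropy is then free from just-infinitude: every proper quotient of $R$ is finite dimensional, and a nonzero finite-dimensional graded algebra has entropy exactly $1$ (its Hilbert series stabilizes). Thus it remains to bound $H(R)$. Since $R$ is generated in degree $1$, $\dim R_{m+n}\leq \dim R_m\dim R_n$, and being the subword complexity of an infinite word $\dim R_n$ is also non-decreasing; combining Fekete's lemma with $\dim R_n\leq \sum_{i\leq n}\dim R_i\leq (n+1)\dim R_n$ gives $H(R)=\lim_n(\dim R_n)^{1/n}$. Evaluating along $n = 2^k$ and using $f(2^k)\leq \dim R_{2^k}\leq 2^{2k+3}f(2^{k+1})$:
\begin{align*}
H(R) &\geq \lim_k (f(2^k))^{1/2^k} = (1+\varepsilon')\lim_k(2^k)^{1/2^k} = 1+\varepsilon/3, \\
H(R) &\leq \lim_k (2^{2k+3}f(2^{k+1}))^{1/2^k} = (1+\varepsilon')^2 = (1+\varepsilon/3)^2.
\end{align*}
Elementary algebra gives $(1+\varepsilon/3)^2 \leq 1+\varepsilon \leq 1+3\varepsilon$ for $\varepsilon\in(0,1)$, closing the bounds.

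The main subtlety, emphasized in the paragraph preceding the corollary, is that the growth-type equivalence $\sim$ does not preserve entropy, so Theorem \ref{uniformly recurrent} alone (which only controls the growth type) is not enough: one must pick $f$ whose actual values at powers of $2$ realize the desired entropy. The polynomial factor $n$ in $f(n)=(1+\varepsilon')^n n$ plays a delicate double role --- it is what makes $f(2^{n+1})/f(2^n)^2$ shrink (so the second hypothesis of Theorem \ref{uniformly recurrent} is satisfied), yet it contributes only a $(2^n)^{1/2^n}\to 1$ factor to $\sqrt[2^n]{f(2^n)}$, so the entropy remains pinned near $1+\varepsilon'$. The only genuine calculation to finalize is that the non-decreasing property of $\dim R_n$ does in fact transfer from the uniformly recurrent infinite word provided by Theorem \ref{uniformly recurrent}, but this is immediate from the fact that any length-$n$ factor of an infinite word extends to a length-$(n+1)$ factor.
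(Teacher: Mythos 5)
Your proposal is correct, and it follows the paper's overall strategy --- apply Theorem \ref{uniformly recurrent} to an exponentially growing $f$, get sharpness for free from just-infiniteness (finite-dimensional quotients have entropy $1$), and bound $H(R)$ via the dimension estimates for the Bartholdi--Smoktunowicz construction --- but it differs in two substantive and interesting ways. First, the choice of $f$: the paper takes $f(n)=\lceil \frac{1}{n}(1+\varepsilon)^n\rceil$, with the polynomial factor in the \emph{denominator}, whereas you take $f(n)=\lceil n(1+\varepsilon/3)^n\rceil$. This is not cosmetic: the second hypothesis of Theorem \ref{uniformly recurrent} demands $f(2^n)^2/f(2^{n+1})\to\infty$, and for the paper's $f$ one gets $f(2^n)^2/f(2^{n+1})\approx 2^{2n}{}^{-1}\cdot 2^{n+1}=2^{1-n}\to 0$, so the paper's function as literally written \emph{fails} this hypothesis; the paper's verification only appears to succeed because it computes $f(2^n)^2\approx \frac{1}{n^2}(1+\varepsilon)^{2^{n+1}}$, substituting $2^n$ into the exponent but keeping $n$ (rather than $2^n$) in the denominator. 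Your placement of the factor $n$ in the numerator --- whose ``delicate double role'' you correctly identify --- is exactly what repairs this: $f(2^{n+1})/f(2^n)^2=2^{1-n}\to 0$ as you compute, while contributing only $(2^k)^{1/2^k}\to 1$ to the entropy. Second, the entropy bounds: the paper cites the proof of \cite[Corollary~2.5]{Greenfeld} to place $H(R)$ between $\sqrt{1+\varepsilon}$ and $(1+\varepsilon)^2$, whereas you rederive the bounds directly from the estimate $f(2^k)\leq \dim_k R_{2^k}\leq 2^{2k+3}f(2^{k+1})$ of \cite[Theorem~C]{BartholdiSmoktunowicz}, using submultiplicativity of $\dim_k R_n$ (valid since $R$ is generated in degree one), Fekete's lemma, and monotonicity of subword complexity to identify $H(R)$ with $\lim_n(\dim_k R_n)^{1/n}$ and evaluate it along $n=2^k$; this is self-contained, gives the sharper lower bound $1+\varepsilon'$ in place of $\sqrt{1+\varepsilon'}$, and your retuning $\varepsilon'=\varepsilon/3$ with $(1+\varepsilon/3)^2\leq 1+\varepsilon$ for $\varepsilon\in(0,1)$ comfortably lands in the required interval $[1+\frac{1}{3}\varepsilon,\,1+3\varepsilon]$. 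In short: same architecture, but your version both avoids the external citation and quietly fixes a genuine slip in the paper's verification of the theorem's hypotheses.
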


\begin{proof}
We apply Theorem \ref{uniformly recurrent} to the function $f(n)=\lceil \frac{1}{n} (1+\varepsilon)^n \rceil$. Note that it does satisfy the Theorem's assumptions, since $f(2^{n+1})=\lceil \frac{1}{2^{n+1}} (1+\varepsilon)^{2^{n+1}} \rceil$ while $f(2^n)^2={\lceil \frac{1}{n} (1+\varepsilon)^{2^n}\rceil}^2\approx \frac{1}{n^2} (1+\varepsilon)^{2^{n+1}}$, and $2^n$ grows much faster than $n^2$.

In addition, $\frac{f(2^n)n^2}{f(2^{n+1})}\approx 2n^2(1+\varepsilon)^{-2^n}\rightarrow 0$ so also the third assumption holds.

Also note that the entropy associated with $f$ is $1+\varepsilon$. Note that though in general the entropy is not an invariant of the growth type (namely, it might change under multiplying $n$ by some constant), it follows from the proof of \cite[Corollary~2.5]{Greenfeld} that the entropy of the constructed algebra (as it fits into the form of algebras constructed in \cite{BartholdiSmoktunowicz}) is between $\sqrt{1+\varepsilon}$ to $(1+\varepsilon)^2$, and the claim follows.
\end{proof}

\subsection{Free subalgebras}
It is clear that if $R$ contains a free subalgebra then it grows exponentially, namely, $H(R)>1$. More precisely, the following holds:

\begin{prop} \label{exponential}
Let $R=\bigoplus_{i\in \mathbb{N}} R_i$, and let the entropy of $R$ be $H(R)=1+\varepsilon$. Suppose $a,b\in \bigoplus_{i\leq d} R_i$ generate a free subalgebra.

Then: $$d\geq \frac{1}{\log_2(1+\varepsilon)}.$$
\end{prop}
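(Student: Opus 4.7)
The plan is to use the freeness assumption to exhibit many linearly independent elements in low degrees of $R$ and then feed this into the definition of entropy.

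First I would observe that since $a,b$ generate a free subalgebra of $R$, the $2^n$ words of length $n$ in the letters $a,b$ are linearly independent in $R$. Next, since $a,b \in \bigoplus_{i\leq d} R_i$, every such word of length $n$, when expanded in $R$, is a sum of homogeneous elements of degree at most $dn$. Consequently, all $2^n$ of these monomials lie inside $\bigoplus_{i\leq dn} R_i$, and their linear independence yields
\[
\dim_k \bigoplus_{i\leq dn} R_i \;\geq\; 2^n.
\]

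Now I would plug this into the definition of entropy. Taking $dn$-th roots gives
\[
\sqrt[dn]{\dim_k \bigoplus_{i\leq dn} R_i} \;\geq\; 2^{1/d},
\]
so passing to the limsup along the subsequence $dn$ (which is legitimate since the whole sequence has limsup equal to $H(R)$ and any subsequence gives a lower bound on the limsup) yields $H(R) \geq 2^{1/d}$. Substituting $H(R) = 1+\varepsilon$ and taking $\log_2$ produces $\log_2(1+\varepsilon) \geq 1/d$, which rearranges to the desired inequality $d \geq 1/\log_2(1+\varepsilon)$.

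The argument is essentially bookkeeping, so there is no serious obstacle; the only point requiring a line of care is that the entropy is defined via a limsup over all $n$, not just the subsequence $\{dn\}$, and one must justify that an inequality holding along the subsequence $\{dn\}$ does bound $H(R)$ from below. This follows immediately because $\dim_k \bigoplus_{i\leq m} R_i$ is monotone in $m$, so for any $m$ one can round up to the next multiple of $d$, incurring only a harmless factor that vanishes upon taking $m$-th roots.
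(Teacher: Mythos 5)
Your proof is correct and follows essentially the same route as the paper: both exhibit the linearly independent words in $a,b$ inside $\bigoplus_{i\leq dn} R_i$, deduce $\dim_k \bigoplus_{i\leq dn} R_i \geq 2^n$ (the paper uses all words of degree at most $n$, of size $2^{n+1}$, rather than words of length exactly $n$), and take $dn$-th roots to get $2^{1/d}\leq H(R)$. Your explicit justification of the limsup-along-a-subsequence step is a point the paper leaves implicit, but it is the same argument.
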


\begin{proof}
Let $S_n=\{w(x,y)|\deg(w)\leq n\}$ be the set of monomials in the non-commuting variables $x,y$ of degree at most $n$. Then $\{w(a,b)|w\in S_n\}$ is a linealry independent subset of $\bigoplus_{i\leq dn} R_i$ of size $2^{n+1}$. Therefore $2^{n+1}\leq \dim_k \bigoplus_{i\leq dn} R_i$, so $2^{\frac{1}{d}}\leq 2^{\frac{n+1}{dn}}\leq H(R)=1+\varepsilon$, and the claim follows.
\end{proof}

We use the technique from \cite{BartholdiSmoktunowicz} used in the previous sections to construct monomial algebras with arbitrarily small entropy and free subalgebras generated by monomials of degrees which are optimal from the point of view of Proposition \ref{exponential}.

\begin{thm}
There exist finitely generated monomial algebras with $H(R)\rightarrow 1^{+}$ and with free subalgebras generated by monomials in degree $\sim \mathcal{O}\left(\frac{1}{\log_2 H(R)}\right)$.
\end{thm}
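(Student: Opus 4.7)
The plan is to reuse the Bartholdi--Smoktunowicz construction with the same function $f(n)=\lceil (1+\varepsilon)^n/n\rceil$ used in the preceding sharp-entropy corollary, but to make a careful inductive choice of the sets $C(2^i)$ so that two fixed monomials $a,b$ of a prescribed length $d$ generate a free subalgebra of the resulting algebra $R$. Since the previous corollary already gave $\sqrt{1+\varepsilon}\leq H(R)\leq (1+\varepsilon)^2$, which makes $\log_2 H(R)=\Theta(\log_2(1+\varepsilon))$, it suffices to arrange $d=\Theta(1/\log_2(1+\varepsilon))$; the same entropy bound automatically gives $H(R)\to 1^+$ as $\varepsilon\to 0^+$.

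Fix $d=2^{i_0}$ to be the smallest power of two satisfying $d\log_2(1+\varepsilon)\geq 2$. Build $W(2^j),C(2^j)$ for $j<i_0$ in the usual (arbitrary) way, then pick any two distinct monomials $a,b\in W(2^{i_0})$ and force them into $C(2^{i_0})$; this is permissible since $|C(2^{i_0})|=\lceil f(2^{i_0+1})/f(2^{i_0})\rceil\approx (1+\varepsilon)^d/2\geq 2$. Inductively, for $i>i_0$, when constructing $C(2^i)$ I include \emph{all} $2^{2^{i-i_0}}$ concatenations of length $2^i$ in the letters $\{a,b\}$, filling the remaining slots arbitrarily. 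These fit inside $C(2^i)$ because $|C(2^i)|\approx (1+\varepsilon)^{2^i}/2$, and the inequality $2^{2^{i-i_0}}\leq (1+\varepsilon)^{2^i}/2$ reduces to $d\log_2(1+\varepsilon)\geq 1+2^{-(i-i_0)}$, which follows from our choice $d\log_2(1+\varepsilon)\geq 2$. Because the cardinalities of the sets $C(2^i)$ are unchanged, the resulting algebra still satisfies $\dim R_n\sim f(n)$ and its entropy still lies between $\sqrt{1+\varepsilon}$ and $(1+\varepsilon)^2$ by the analysis of the sharp-entropy corollary.

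To verify freeness: by the inductive construction, any length-$2^i$ concatenation $u=u_1u_2$ in $\{a,b\}$ (with $u_1,u_2$ of length $2^{i-1}$) has $u_1\in C(2^{i-1})$ and $u_2\in W(2^{i-1})$, hence $u\in C(2^{i-1})W(2^{i-1})=W(2^i)$. For an arbitrary non-commutative word $w(x,y)$ of length $m$, the monomial $w(a,b)$ can be extended by appending further copies of $a$ to a $\{a,b\}$-concatenation of length $2^i d$, which lies in $W$; thus $w(a,b)$ is a subword of an element of $W$ and hence a nonzero monomial of $R$. Since $a\neq b$, distinct non-commutative words $w$ yield distinct monomials $w(a,b)$, and distinct monomials in a monomial algebra are automatically linearly independent, so $a,b$ freely generate a subalgebra. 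They are supported in $R_d$ with $d=\Theta(1/\log_2(1+\varepsilon))=O(1/\log_2 H(R))$, as required.

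The only delicate point is the dimension bookkeeping at the initial level $i=i_0$, where the budget $|C(2^{i_0})|$ just barely accommodates both $a$ and $b$; this is what prevents $d$ from reaching the information-theoretic lower bound $1/\log_2(1+\varepsilon)$ from Proposition~\ref{exponential} and forces the implicit multiplicative constant in the $O(\cdot)$. Past level $i_0$ the dimension slack $|C(2^i)|/2^{2^{i-i_0}}$ grows doubly exponentially, so the induction coasts and the free parameters in the remaining slots of each $C(2^i)$ continue to control the growth asymptotics.
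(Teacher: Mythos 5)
Your proposal is correct and follows essentially the same route as the paper's proof: run the Bartholdi--Smoktunowicz construction with $f(n)\approx(1+\varepsilon)^n$, fix a block length $d=2^{i_0}=\Theta\left(\frac{1}{\log_2(1+\varepsilon)}\right)$ chosen so that the doubly exponential count $2^{2^{i-i_0}}$ of block concatenations fits inside $|C(2^i)|\approx(1+\varepsilon)^{2^i}$, force all such concatenations into the sets $C(2^i)$, and deduce freeness from the absence of monomial relations together with the entropy window $\sqrt{1+\varepsilon}\leq H(R)\leq(1+\varepsilon)^2$ from \cite[Corollary~2.5]{Greenfeld}. The only (immaterial) deviations are that the paper takes the specific generators $x^{2^t},y^{2^t}$ — forcing $x^{2^i},y^{2^i}\in C(2^i)$ for all $i\leq t$ — where you take arbitrary distinct $a,b\in W(2^{i_0})$, and that you keep the $\frac{1}{n}$ factor in $f$ from the sharp-entropy corollary, absorbing it by the slightly stronger requirement $d\log_2(1+\varepsilon)\geq 2$.
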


\begin{proof}
Suppose $f(n)=\lceil (1+\varepsilon)^n \rceil$. As follows from the proof of \cite[Corollary~2.5]{Greenfeld}, we have that $\sqrt{1+\varepsilon}\leq H(R)\leq (1+\varepsilon)^2$ for $R$ constructed as above. From now on we let $\varepsilon<1$, then we get $1+\frac{1}{3}\varepsilon\leq H(R)\leq 1+3\varepsilon$.

We now turn to construct specific sets $W(2^i),C(2^i)$, such that the resulting algebra $R$ has a free subalgebra generated by monomials of specific degree. Fix $t=\lceil -\log_2\log_2(1+\varepsilon) \rceil + 1$, so that $2^t>\frac{1}{\log_2(1+\varepsilon)}=\log_{1+\varepsilon} 2$.

Now $d=\lceil 1+\varepsilon \rceil=2$ and let $x_1,x_2=x,y$. For $i\leq t$ we let $W(2^i),C(2^i)$ be arbitrary, up to the requirement that $x^{2^i},y^{2^i}\in C(2^i)$ which can be clearly be satisfied (as $f(n)\geq 2$ always).

We now turn to define $C(2^{t+r})$ where $r>0$. Note that $$|C(2^{t+r})|=\lceil (1+\varepsilon)^{2^{t+r+1}-2^{t+r}}\rceil=\lceil (1+\varepsilon)^{2^{t+r}}\rceil \geq ((1+\varepsilon)^{2^t})^{2^r}\geq 2^{2^r},$$ and by induction all monomials of length at most $2^{r-1}$ in $x^{2^t},y^{2^t}$ occur as subwords of $W(2^{t+r})$, so we can pick $C(2^{t+r})$ to be an arbitrary subset of $W(2^{t+r})$ containing all products of the form $uv$ where $u,v$ are length $2^{r-1}$ monomials in $x^{2^t},y^{2^t}$; such products are precisely all monomials of length $2^r$ monomials in $x^{2^t},y^{2^t}$, and since there are $2^{2^r}$ of them (and $|C(2^{t+r})|$ is required by construction to have cardinality which is large enough as we saw above) we can proceed the induction step.

Finally, we get an algebra $R$ with entropy $1+\frac{1}{3}\varepsilon\leq H(R)\leq 1+3\varepsilon$ and such that $x^{2^t},y^{2^t}$ generate a free subalgebra (since there is no monomial relation upon them, and $R$ is a monomial algebra). Note that $\deg(x^{2^t})=\deg(y^{2^t})=2^t=\mathcal{O}(\frac{1}{\log_2(1+\varepsilon)})$. Thus, comparing with Proposition \ref{exponential}, we get asymptotic optimality both for the construction and for the Proposition (up to a universal constant multiple).
\end{proof}


\begin{thebibliography}{99}

\bibitem{AS}
J.~Allouche, J.~Shallit, \textit{Automatic Sequences: Theory, Applications, Generalizations}, Cambridge University Press (2013).

\bibitem{BeidarFong}
K.~I.~Beidar, Y.~Fong, \textit{On radicals of monomial algebras}, Communications in Algebra (1998) 26, 12, 3913--3919.

\bibitem{BM}
J.~P.~Bell, B.~Madill, \textit{Iterative algebras}, Algebras and Representation Theory (2015) 18, 6, 1533--1546.

\bibitem{Belovetal}
A.~Ya.~Belov, V.~V.~Borisenko, and V.~N.~Latyshev, \textit{Monomial algebras}, J.~Math.~Sci.~(New York) 87 (1997), no.~3, 3463–3575. Algebra, 4.

\bibitem{Bergman} 
G.~M.~Bergman, \textit{On Jacobson radicals of graded rings}, preprint, 1975. Available at {\tt http://math.berkeley.edu/$\sim$gbergman/papers/unpub/}.

\bibitem{BR}
V.~Berth\'e, M.~Rigo, \textit{Combinatorics, automata, and number theory}, Encyclopedia of Mathematics and its Applications. 135. Cambridge University Press (2010).

\bibitem{Julien}
J.~Cassaigne, \textit{Constructing Inﬁnite Words of Intermediate Complexity}, Chapter `Developments in Language Theory', volume 2450 of the series `Lecture Notes in Computer Science': 173--184.

\bibitem{FP}
J.~Farina and C.~Pendergrass-Ricea, \textit{A few properties of just infinite algebras}, Communications in Algebra {\bf 35} (5) (2007), 1703--1707.

\bibitem{Greenfeld}
B.~Greenfeld, \textit{Prime and primitive algebras with prescribed growth types}, accepted to the Israel Journal of Mathematics, available in arXiv:1604.07477.

\bibitem{L}
M.~Lothaire, \textit{Algebraic combinatorics on words}, Encyclopedia of Mathematics and Its Applications. 90. With preface by Jean Berstel and Dominique Perrin (Reprint of the 2002 hardback ed.). Cambridge University Press (2011).

\bibitem{Nekrashevych}
V.~Nekrashevych, \textit{Growth of \'etale Groupoids and Simple Algebras}, International Journal of Algebra and Computation, 26, 375 (2016).

\bibitem{Entropy}
M.~F.~Newman, C.~Schneider, A.~Shalev, \textit{The entropy of graded algebras}, Journal of Algebra 223, 85–100 (2000).

\bibitem{Trichotomy}
J.~Okni\'nski, \textit{Trichotomy for finitely generated monomial algebras}, Journal of Algebra {\bf 417}, 1 (2014), 145--147.

\bibitem{F}
N.~Pytheas Fogg, \textit{Substitutions in dynamics, arithmetics and combinatorics}, Lecture Notes in Mathematics. 1794. Editors Berthé, Valérie; Ferenczi, Sébastien; Mauduit, Christian; Siegel, A. Berlin: Springer-Verlag (2002).

\bibitem{Smoktunowicz}
A.~Smoktunowicz, \textit{Growth, Entropy and Commutativity of Algebras Satisfying Prescribed Relations}, Selecta Mathematica (2014) 20, 4: 1197--1212.

\bibitem{BartholdiSmoktunowicz}
A.~Smoktunowicz, L.~Bartholdi, \textit{Images of Golod-Shafarevich Algebras with Small Growth}, Quarterly Journal of Mathematics (2014) 65 (2): 421--438.


\end{thebibliography}
\end{document}